\documentclass{endm}
\usepackage{endmmacro}
\usepackage{graphicx}
\usepackage{amsmath}

\newcommand{\Hom}{\mathop{\to}}
\newcommand{\FullHom}{\mathop{\xrightarrow{_F}}}
\newcommand{\FullLeq}{\mathop{\leq^F}}

\newcommand{\Graphs}[0]{\ensuremath{\mathsf{Graphs}}}

\def\Rel{\mathop{\mathrm{Rel}}\nolimits}

\let\Homeq\sim
\newcommand{\FullHomeq}{\mathop{\Homeq^{F}}}

\def\str#1{\mathbf {#1}}

\def\arity#1{a(\rel{}{#1})}

\def\rel#1#2{R_{\mathbf{#1}}^{#2}}

\begin{document}

\pagestyle{plain}
\begin{frontmatter}

\title{Gaps in full homomorphism order}

\author{Ji\v r\' \i{} Fiala\thanksref{g1}\thanksref{fiala@kam.mff.cuni.cz}}
\address{Department of Applied Mathematics\\ Charles University\\ Prague, Czech Republic}
\author{Jan Hubi\v cka\thanksref{ERC}\thanksref{hubicka@iuuk.mff.cuni.cz}}
\address{Computer Science Institute of Charles University (IUUK)\\ Charles University\\ Prague, Czech Republic}
\author{Yangjing Long\thanksref{g2}\thanksref{yjlong@sjtu.edu.cn}}
\address{School of Mathematical Sciences\\ Shanghai Jiao Tong University\\Shanghai, China}
\thanks[g1]{Supported by M\v{S}MT \v{C}R grant LH12095 and GA\v{C}R grant P202/12/G061.}
\thanks[ERC]{Supported by grant ERC-CZ LL-1201 of the Czech Ministry of Education and CE-ITI P202/12/G061 of GA\v CR.}
\thanks[g2]{Supported by National Natural Science Foundation of China (No. 11271255)}
\thanks[fiala@kam.mff.cuni.cz]{Email: \href{mailto:fiala@kam.mff.cuni.cz} {\texttt{\normalshape{fiala@kam.mff.cuni.cz}}}}
\thanks[hubicka@iuuk.mff.cuni.cz]{Email: \href{mailto:hubicka@iuuk.mff.cuni.cz} {\texttt{\normalshape{hubicka@iuuk.mff.cuni.cz}}}}
\thanks[yjlong@sjtu.edu.cn]{Email: \href{mailto:yjlong@sjtu.edu.cn} {\texttt{\normalshape{yjlong@sjtu.edu.cn}}}}
\begin{abstract}
We characterise gaps in the full homomorphism order of graphs.
\end{abstract}
\begin{keyword}
graph homomorphism, full homomorphism, homomorphism order, gap, full homomorphism duality
\end{keyword}
\end{frontmatter}
\maketitle
\begin{abstract}
We fully characterise pairs of finite graphs which form a gap in
the full homomorphism order.  This leads to a simple proof of the existence
of generalised duality pairs. We also discuss how such results can be carried to
relational structures with unary and binary relations.
\end{abstract}
\section{Introduction}

For given graphs $G=(V_G,E_G)$ and $H=(V_H,E_H)$ a {\em homomorphism} $f:G\to H$ is a
mapping $f:V_G\to V_H$ such that $\{u,v\}\in E_G$ implies $\{f(u),f(v)\}\in E_H$. (Thus it is an edge preserving mapping.)
The existence of a homomorphism $f:G\to H$ is traditionally denoted by $G\Hom H$.
This allows us to consider the existence of a homomorphism, $\Hom$, to be a
(binary) relation on the class of graphs. 
A homomorphism $f$ is {\em full} if $\{u,v\}\notin E_G$ implies $\{f(u),f(v)\}\notin
E_H$.  (Thus it is an edge and non-edge preserving mapping).  Similarly we will
denote by $G\FullHom H$ the existence of a full homomorphism $f:G\to H$.

As it is well known, the relations $\to$ and $\FullHom$ are reflexive (the identity is a
homomorphism) and transitive (a composition of two homomorphisms is still a
homomorphism). Thus the existence of a homomorphism as well as the existence of full homomorphisms induces a quasi-order on
the class of all finite graphs. We denote the quasi-order induced by
the existence of homomorphisms and the existence of full homomorphism on finite graphs by ($\Graphs,\leq)$ and ($\Graphs,\FullLeq)$ respectively.
(Thus when speaking of orders, we use $G\leq H$ in the same sense as $G\Hom H$ and $G\FullLeq H$ in the sense $G\FullHom H$.)

 These quasi-orders can be easily transformed
into  partial orders by choosing a particular representative for each
equivalence class. In the case of graph homomorphism such representative is up
to isomorphism unique vertex minimal element of each class, the {\em (graph) core}. In the case
of full homomorphisms we will speak of {\em F-core}.

The study of homomorphism order is a well established discipline and one
of main topics of nowadays classical monograph of Hell and
Ne\v{s}et\v{r}il~\cite{Hell2004}. The order $(\Graphs,\FullLeq)$ is a topic
of several publications \cite{Xie2006,Feder2008,Hell2013,Ball2010,Fiala} which 
are primarily concerned about the full homomorphism equivalent of the homomorphism
duality~\cite{Nesetril2000}.

In this work we further contribute to this line of research by characterising
{\em F-gaps} in $(\Graphs,\FullLeq)$. That is pairs of non-isomorphic F-cores $G\FullLeq H$ such
that  every F-core $H'$, $G\FullLeq H'\FullLeq H$, is isomorphic either to $G$ or $H$. We will show:
\begin{thm}
\label{thm:fullgap}
If $G$ and $H$ are F-cores and $(G,H)$ is an F-gap, then $G$ can be obtained from $H$
by removal of one vertex.
\end{thm}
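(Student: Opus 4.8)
The plan is to first reduce the statement to a clean problem about induced subgraphs. The first step is to observe that a full homomorphism $f\colon G\FullHom H$ between two F-cores is necessarily injective: if $f(u)=f(v)$ with $u\neq v$ then $u,v$ are non-adjacent (a full homomorphism keeps adjacent vertices distinct), and for every other vertex $w$ one has $w$ adjacent to $u$ iff $f(w)$ adjacent to $f(u)=f(v)$ iff $w$ adjacent to $v$; thus $u$ and $v$ have the same neighbourhood, so $G$ folds one onto the other and is not an F-core. Consequently an injective full homomorphism is an isomorphism onto an induced subgraph, F-equivalent F-cores are isomorphic, and the F-core of any graph is its reduct obtained by repeatedly identifying vertices with equal neighbourhood (equivalently, F-cores are exactly the graphs in which distinct vertices have distinct neighbourhoods). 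Hence $G\FullLeq H$ for F-cores means $G$ is isomorphic to an induced subgraph of $H$, and an F-core lies strictly between $G$ and $H$ iff it is an induced subgraph of $H$ that contains an induced copy of $G$ and has intermediate size. So I fix an embedding $G=H[W]$, put $T=V_H\setminus W$, and must prove $|T|=1$.

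The second step exploits the gap hypothesis. For any $v\in T$ the graph $H-v$ still contains the copy $G=H[W]$, so its F-core $P$ satisfies $G\FullLeq P\FullLeq H$; being a gap, and having fewer vertices than $H$, forces $P\cong G$, i.e. $H-v$ is F-equivalent to $G$ for every $v\in T$. This yields, for each $v$, a retraction $r_v\colon H-v\FullHom H[W]$ which I may take to fix $W$ pointwise; fullness then sends every $t\in T\setminus\{v\}$ to the unique vertex $\phi(t)\in W$ with $N_W(\phi(t))=N(t)\cap W$ (unique because $G$ is an F-core), with $t$ non-adjacent to $\phi(t)$, and forces $t\sim t'\iff\phi(t)\sim\phi(t')$ for all $t,t'\in T\setminus\{v\}$. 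Consider the canonical map $\Phi\colon H\to H[W]$ that is the identity on $W$ and sends $t\mapsto\phi(t)$: it preserves adjacency and non-adjacency of every $W$--$W$ and every $W$--$T$ pair by construction, so $\Phi$ is a full homomorphism as soon as all $T$--$T$ pairs are preserved. If $|T|\geq 3$ then every pair $\{t,t'\}$ has a spare $v$, so all $T$--$T$ pairs are preserved, $\Phi$ is a full homomorphism $H\FullHom G$, and $H$ is F-equivalent to $G$---contradicting $G\not\cong H$. Thus $|T|\le 2$.

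The main obstacle is the remaining case $|T|=2$, say $T=\{a,b\}$, where the argument above cannot supply the third vertex and, in fact, $\Phi$ must fail on the single pair $\{a,b\}$: since $\Phi$ is not a full homomorphism we get $[a\sim b]\neq[\phi(a)\sim\phi(b)]$, and since $H$ is an F-core the pairs $(a,\phi(a))$ and $(b,\phi(b))$ are not twins, giving $[a\sim b]\neq[\phi(a)\sim b]$ and $[a\sim b]\neq[\phi(b)\sim a]$. Here the gap is genuine in the sense that every induced subgraph on $|V_H|-1$ vertices that still contains $G$ is F-equivalent to $G$, so no intermediate of the obvious shape exists and one cannot simply delete a vertex. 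I would resolve this by a close analysis of the two cases $a\sim b$ and $a\not\sim b$: the forced fold of $H-\phi(a)$ (respectively $H-\phi(b)$) produces auxiliary vertices of $W$ that agree with $\phi(b)$ (resp. $\phi(a)$) in all but one adjacency---``almost twins''---and tracking these almost-twins either exhibits an induced subgraph on $|V_H|-1$ vertices that is an F-core and still contains a copy of $G$ (a genuine intermediate, contradicting the gap) or produces two vertices of $H$ with equal neighbourhood, contradicting that $H$ is an F-core. Taking $H$ to be a vertex-minimal counterexample makes the descent through these almost-twins terminate, and is, I expect, the cleanest way to close this last case.
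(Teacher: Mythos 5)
Your reduction to induced subgraphs is correct and matches Propositions~\ref{prop:F-core} and~\ref{prop:thincmp} of the paper, and your handling of the case $|T|\geq 3$ is sound and genuinely different from the paper's argument: the gap forces the F-core of $H\setminus v$ to be $G$ for every $v\in T$, the resulting retractions onto $H[W]$ agree on a single map $\phi\colon T\to W$ because $G$ is point-determining, and when $|T|\geq 3$ every $T$--$T$ pair has a spare deleted vertex witnessing that $\Phi$ preserves it, so $\Phi$ is a full homomorphism $H\FullHom G$, a contradiction. That is a clean local argument the paper does not use (the paper instead runs one uniform construction via Lemma~\ref{lem:determining} and an infinite sequence of determined pairs covering all $|T|\geq 2$ at once).

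However, the case $|T|=2$ is not proved, and this is a genuine gap rather than a routine omission: it is exactly where the difficulty of the theorem is concentrated. In that case one can check that $H\setminus a$ and $H\setminus b$ are \emph{automatically} not point-determining ($a$ determines the pair $\{b,\phi(b)\}$ and $b$ determines $\{a,\phi(a)\}$, since those neighbourhoods agree everywhere else), so deleting a vertex of $T$ can never produce the intermediate F-core; a contradiction must instead be extracted by finding some $w\in W$ such that $H\setminus w$ is point-determining \emph{and} still contains an embedded copy of $G$ other than $H[W]$, or by exhibiting twins in $H$ itself. Your sketch correctly identifies the first step (the forced fold of $H\setminus\phi(a)$ produces a vertex $z\in W$ that is an ``almost twin'' of $\phi(b)$, differing only at $\phi(a)$), but the claim that ``tracking these almost-twins'' terminates, via a vertex-minimal counterexample, in one of the two desired contradictions is asserted, not proved --- you say so yourself (``I expect''). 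Nothing in the proposal shows the chain of almost-twins cannot cycle back on itself harmlessly; the paper's proof has to build the full sequences $u_0,u_1,\ldots$ and $v_0,v_1,\ldots$ of determined pairs and then rule out repetition with a separate monotonicity argument on membership of $v_i$ in the neighbourhoods of $u_i,u_{i+1},\ldots$, which is precisely the termination argument your plan is missing. Until that descent is carried out and its end states are shown to be exactly the two contradictions you name, the theorem is not established for $|T|=2$, and hence not established at all.
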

First we show a known fact that F-cores correspond to point-determining graphs
which have been studied in 70's by Sumner~\cite{Sumner1973} (c.f. Feder and
Hell~\cite{Feder2008}). We also show that there is a full homomorphism between two F-cores if
and only if there is an embedding from one to another (see~\cite[Section 3]{Feder2008}). These two observations shed a lot of light into the nature of full homomorphism order and  makes the
characterisation of F-gaps look particularly innocent (clearly gaps in embedding
order are characterised by an equivalent of Theorem~\ref{thm:fullgap}).  The
arguments in this area are however surprisingly subtle. This becomes even more
apparent when one generalise the question to classes of graphs as done
by Hell and Hern\'andez-Cruz~\cite{Hell2013} where both results of
Sumner~\cite{Sumner1973} and Feder and Hell~\cite{Feder2008} are given for
digraphs by new arguments using what one could consider to be surprisingly
elaborate (and interesting) machinery needed to carry out the analysis.

We focus on minimising arguments about the actual structure of graphs and
use approach which generalises easily to digraphs and binary
relational structures in general (see Section~\ref{sec:relational}). In
Section~\ref{sec:pd} we outline the connection of point determining graphs and
F-cores. In Section~\ref{sec:gaps} we show proof of the main result.
In Section~\ref{sec:dualities} we show how the existence of gaps leads to a particularly
easy proof of the existence of generalised dualities (main results of~\cite{Xie2006,Feder2008,Hell2013,Ball2010}).

\section{F-cores are point-determining}
\label{sec:pd}
In a graph $G$, the {\em neighbourhood} of a vertex $v\in V_G$, denoted
by $N_G(v)$, is the set of all vertices $v'$ of $G$ such that $v$ is adjacent to $v'$ in $G$.  {\em Point-determining graphs} are
graphs in which no two vertices have the same neighbourhoods.  If we start with
any graph $G$, and gradually merge vertices with the same neighbourhoods, we
obtain a point-determining graph, denoted by $G_{\mathrm{pd}}$.

We write $G\FullHomeq H$ for any pairs of graphs such that $G\FullHom H$ and $H\FullHom G$.
It is easy to observe that $G_{\mathrm{pd}}$ is always an induced subgraph of $G$. Moreover, for every graph $G$ it holds that $G_{\mathrm{pd}}\FullHom G\FullHom G_{\mathrm{pd}}$ and thus $G\FullHomeq G_{\mathrm{pd}}$.  This motivates the following proposition:
\begin{prop}[\cite{Feder2008}]
\label{prop:F-core}
A finite graph $G$ is an F-core if and only if it is point-determining.
\end{prop}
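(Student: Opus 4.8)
The plan is to prove both implications by leaning on a single structural fact, namely that full homomorphisms out of a point-determining graph must be injective, and then combining it with the observations about $G_{\mathrm{pd}}$ already recorded above.

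First I would establish the key lemma: \emph{if $G$ is point-determining, then every full homomorphism $f\colon G\FullHom H$ is injective}. The idea is to examine what it would mean for $f$ to identify two distinct vertices $u\neq v$ with $f(u)=f(v)=:x$. Since a simple graph has no loop at $x$, fullness forces $\{u,v\}\notin E_G$; and for every other vertex $w$, fullness gives $\{u,w\}\in E_G\iff\{x,f(w)\}\in E_H\iff\{v,w\}\in E_G$. Together these yield $N_G(u)=N_G(v)$, contradicting that $G$ is point-determining. Hence $f$ is injective.

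Given the lemma, the harder ($\Leftarrow$) direction is short. Suppose $G$ is point-determining and let $H$ be any graph with $H\FullHomeq G$, witnessed by $f\colon G\FullHom H$ and $g\colon H\FullHom G$. The composite $g\circ f\colon G\FullHom G$ is a full homomorphism out of the point-determining graph $G$, hence injective by the lemma; consequently $f$ itself is injective and $|V_H|\ge |V_G|$. Thus $G$ is a vertex-minimal element of its $\FullHomeq$-class, i.e.\ an F-core. The same argument applied to two point-determining graphs $G\FullHomeq H$ shows that $f$ is a bijective full homomorphism, hence an isomorphism, which confirms that the minimal representative is unique up to isomorphism. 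For the ($\Rightarrow$) direction I would argue by contrapositive using the easy observation already made: if $G$ is not point-determining, then $G_{\mathrm{pd}}$ is a proper induced subgraph with strictly fewer vertices, and since $G\FullHomeq G_{\mathrm{pd}}$, the graph $G$ is not vertex-minimal in its class and therefore not an F-core.

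I expect the only genuine obstacle to be the injectivity lemma, specifically the bookkeeping that ruling out $f(u)=f(v)$ requires both handling the pair $\{u,v\}$ itself (fullness forces its non-adjacency, so that $N_G(u)=N_G(v)$ holds on the nose rather than only away from $u$ and $v$) and invoking fullness in the direction that non-edges map to non-edges. Everything after that is pigeonhole together with the recorded facts about $G_{\mathrm{pd}}$.
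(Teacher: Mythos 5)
Your proof is correct and follows essentially the same route as the paper: the forward direction via $G_{\mathrm{pd}}$ is identical, and your injectivity lemma is exactly the paper's observation that identifying $u\neq v$ under a full homomorphism forces $N_G(u)=N_G(v)$. Packaging this as a standalone lemma and concluding minimality directly from $|V_H|\geq|V_G|$ (rather than deriving a contradiction from two non-isomorphic equivalent point-determining graphs, as the paper does) is only a slightly cleaner presentation of the same idea.
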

\begin{proof}
Recall that $G$ is an F-core if it is minimal (in the number of vertices) within its
equivalence class of $\FullHomeq$. If $G$ is an F-core, $G_{\mathrm{pd}}$ can
not be smaller than $G$ and thus $G=G_{\mathrm{pd}}$.

It remains to show that every point-determining graph is an F-core.
Consider two point-determining graphs $G\FullHomeq H$ that are not isomorphic. There are 
full homomorphisms $f:G\FullHom H$ and $g:H\FullHom G$. Because injective full
homomorphisms are embeddings, it follows that either $f$ or $g$ is not
injective.  Without loss of generality, assume that $f$ is not injective. Consider $u,v\in V_G$, $u\neq v$, such
that $f(u)=f(v)$. Because full homomorphisms preserve both edges and non-edges,
the preimage of any edge is a complete bipartite graph.
If we apply this fact on edges incident with $f(u)$, 
we derive that $N_G(u)=N_G(v)$.
\end{proof}

\begin{prop}[\cite{Feder2008,hell2014connected}]
\label{prop:thincmp}
For F-cores $G$ and $H$ we have $G\FullHom H$ if and only if $G$ is an induced subgraph of $H$.
\end{prop}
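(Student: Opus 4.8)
The plan is to prove the two implications separately, with the forward direction (embedding implies full homomorphism) being immediate and the reverse direction carrying all of the content.

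First I would dispose of the easy direction. If $G$ is an induced subgraph of $H$, then the inclusion map $\iota\colon V_G\to V_H$ preserves edges and, because the subgraph is \emph{induced}, also preserves non-edges; hence $\iota$ is a full homomorphism and $G\FullHom H$. This direction does not even use that $G$ and $H$ are F-cores.

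For the converse, suppose $f\colon G\FullHom H$ is a full homomorphism between F-cores. My goal is to show that $f$ is injective, since an injective full homomorphism is an embedding and its image is then exactly an induced copy of $G$ inside $H$. To establish injectivity I would argue by contradiction: assume $f(u)=f(v)$ for some $u\neq v$. As $H$ is loopless, $u$ and $v$ cannot be adjacent, for otherwise $\{f(u),f(v)\}$ would have to be an edge at a single vertex. Then, exactly as in the proof of Proposition~\ref{prop:F-core}, the fact that $f$ preserves both edges and non-edges forces $N_G(u)=N_G(v)$: any neighbour $w$ of $u$ satisfies $\{f(v),f(w)\}=\{f(u),f(w)\}\in E_H$, so fullness makes $w$ a neighbour of $v$ as well, and symmetrically.

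The main obstacle, and the crux of the argument, is precisely this injectivity step, and it is resolved by invoking Proposition~\ref{prop:F-core}: since $G$ is an F-core it is point-determining, so no two distinct vertices can share a neighbourhood, contradicting $N_G(u)=N_G(v)$. Hence $f$ is injective, therefore an embedding, and $G$ is an induced subgraph of $H$. I expect the only delicacy to be the loopless remark ruling out the case that $u$ and $v$ are adjacent, which guarantees $u\notin N_G(v)$ and $v\notin N_G(u)$ so that the two neighbourhoods genuinely coincide.
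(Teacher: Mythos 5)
Your proof is correct and follows essentially the same route as the paper: the easy direction is the observation that an embedding is a full homomorphism, and the converse reduces to injectivity of $f$, which is forced because $G$ is point-determining by Proposition~\ref{prop:F-core}. Your explicit handling of the looplessness point (ruling out $u$ adjacent to $v$) is a detail the paper leaves implicit in its ``preimage of an edge is complete bipartite'' phrasing, but it is the same argument.
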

\begin{proof}
Embedding is a special case of a full homomorphisms. In the opposite direction
consider a full homomorphism $f:G \FullHom H$. By the same
argument as in the proof of Proposition \ref{prop:F-core} we get that $f$ is injective,
as otherwise $G$ would not be point-determining.
\end{proof}

\section{Main result: characterisation of F-gaps}
\label{sec:gaps}
Given a graph $G$ and a vertex $v\in V_G$ we denote by $G\setminus v$ the graph created from $G$ by removing vertex $v$.
We say that vertex $v$ \emph{determines} a pair of vertices $u$ and $u'$ if  $N_{G\setminus v}(u)=N_{G\setminus v}(u')$.
This relation (pioneered in \cite{Feder2008} and used in~\cite{Feder2008,Hell2013,Xie2006})will play key role in our analysis.  
We make use of the following Lemma:

\begin{lem}
\label{lem:determining}
Given a graph $G$ and a subset $A$ of the set of vertices of $G$ denote by $L$ a graph on the vertices of $G$, where $u$ and $u'$ are adjacent if and only if there 
is $v\in A$  that determines $u$ and $u'$. Let $S$ be any spanning tree of $L$.
Denote by $B\subseteq A$ the set of vertices that determine some pair of vertices connected by an
edge of $L$ and by $C\subseteq B$ set of vertices that determine some pair of vertices connected by 
an edge of $S$. Then $B=C$.
\end{lem}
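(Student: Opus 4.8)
The plan is to prove only the nontrivial inclusion $B\subseteq C$, since $C\subseteq B$ holds by definition. The first step is to rewrite the determining relation through neighbourhoods: because $N_{G\setminus v}(u)=N_G(u)\setminus\{v\}$, a vertex $v$ determines $u,u'$ exactly when $N_G(u)$ and $N_G(u')$ differ in at most the single vertex $v$, i.e. when $N_G(u)\mathbin{\triangle}N_G(u')\subseteq\{v\}$. Consequently every edge $\{u,u'\}$ of $L$ has $N_G(u)\mathbin{\triangle}N_G(u')$ equal to a one-element set $\{w\}$ with $w\in A$ (this is the step that uses that no two vertices have equal neighbourhood), and $w$ is then the unique vertex determining that edge. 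This lets me view $L$ as an edge-labelled graph, the label of an edge being its determining vertex, and the claim $B=C$ becomes: every spanning tree uses, for each label occurring on some edge of $L$, at least one edge carrying that label.

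Fix $v\in B$ together with an edge $e=\{a,b\}$ of $L$ that $v$ determines; I must exhibit an edge of $S$ labelled $v$. If $e\in S$ we are done, so assume $e\notin S$ and let $P$ be the unique $a$--$b$ path in $S$. The key device is the indicator $g(x)=1$ when $x$ is adjacent to $v$ in $G$ and $g(x)=0$ otherwise. For any edge $\{x,y\}$ of $L$ one has $v\in N_G(x)\mathbin{\triangle}N_G(y)$ precisely when $g(x)\neq g(y)$; and since for an $L$-edge the symmetric difference is a singleton contained in $A$, the mere presence of $v$ in it forces $N_G(x)\mathbin{\triangle}N_G(y)=\{v\}$, hence forces the edge to be labelled $v$. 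As $v$ determines $e$ and $a,b$ do not have equal neighbourhoods, we get $g(a)\neq g(b)$, so walking along $P$ the value of $g$ must flip at least once; an edge of $P$ where it flips is labelled $v$ and lies in $S$, giving $v\in C$. Equivalently one may close $P$ with $e$ into a cycle and note that the symmetric differences of the edges around any cycle sum (under $\mathbin{\triangle}$) to the empty set, so the label $v$ appears an even number of times around it; since $e$ supplies one occurrence, $P$ must supply an odd, hence positive, number.

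The delicate point is exactly the hypothesis that no two vertices share a neighbourhood, which I invoked twice: to assign each $L$-edge a single determining vertex, and to secure $g(a)\neq g(b)$. For a pair $a,b$ with $N_G(a)=N_G(b)$ one has $g(a)=g(b)$ and the parity argument collapses, so the degenerate ``twin'' case is the main obstacle and must be isolated first. In the setting where the lemma is used, however, $G$ is an F-core and thus point-determining by Proposition~\ref{prop:F-core}, so no such twins occur and the argument above goes through verbatim; I would therefore organise the write-up to make the point-determining reduction explicit before running the parity argument, leaving only the clean labelled-tree reasoning in the body.
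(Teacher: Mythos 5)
Your proof is correct and follows essentially the same route as the paper's: the paper likewise closes the non-tree edge determined by $v$ into a cycle of $S$ and locates an edge of the tree path where adjacency to $v$ flips, which is exactly your indicator-function (parity) argument. Your explicit isolation of the twin case is a fair refinement rather than a different approach, since the paper's own proof (the claimed uniqueness of the determining vertex and the ``without loss of generality'' step) silently relies on the same point-determining hypothesis that is only guaranteed where the lemma is applied.
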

\begin{proof}
Because for every pair of vertices there is at most one vertex determining them clearly $C\subseteq B\subseteq A$.

Assume to the contrary that there is vertex $v\in B\setminus C$ and thus every
pair determined by $v$ is an edge of $L$ but not an edge of $S$.  Denote by $\{u,u'\}$  some such edge of $L$ determined by $v\in B$.
Adding this edge to $S$ closes a cycle. Denote by $u=v_1,
v_2,\ldots v_n=u'$ the vertices of $G$ such that every consecutive pair is an
edge of $S$.  Without loss of generality, we can assume that $v\in N_G(v_1)$ and $v\notin
N_G(v_n)$. Because $v\in N_G(v_i)$ implies $v\in N_G(v_{i+1})$ unless $v$
determines pair $\{v_i,v_{i+1}\}$ we also know that there is $1\leq i<n$
such that $v$ determines $v_i$ and $v_{i+1}$.  A contradiction with the fact that
${v_i, v_{i+1}}$ forms an edge of $S$.
\end{proof}
As a warmup we show the following theorem which also follows by \cite{Sumner1973} (also shown as Corollary 3.2 in \cite{Feder2008} for graphs and \cite{Hell2013} for digraphs):
\begin{thm}[\cite{Sumner1973,Feder2008,Hell2013}]
\label{thm:sub}
Every F-core $G$ with at least 2 vertices contains an $F$-core with $|V_G|-1$ vertices as an induced subgraph.
\end{thm}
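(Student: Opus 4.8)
The plan is to recast the statement in the language of Section~\ref{sec:pd} and then use Lemma~\ref{lem:determining} purely as a counting device. By Proposition~\ref{prop:F-core}, an F-core is exactly a point-determining graph, and conversely any point-determining induced subgraph of $G$ on $|V_G|-1$ vertices is automatically an F-core of the required size. Hence it suffices to produce a single vertex $v\in V_G$ whose removal preserves point-determinacy. By definition of the determining relation, $G\setminus v$ is point-determining precisely when $v$ \emph{determines} no pair $\{u,u'\}$: if $v$ determines no pair, then $N_{G\setminus v}(u)\neq N_{G\setminus v}(u')$ for all distinct $u,u'$, which is point-determinacy of $G\setminus v$. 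So the whole theorem reduces to showing that a point-determining $G$ has at least one vertex that determines nothing.

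Before the main argument I would isolate the uniqueness fact that makes Lemma~\ref{lem:determining} usable as a bound. Since $G$ is point-determining, whenever $v$ determines $\{u,u'\}$ we have $N_{G\setminus v}(u)=N_{G\setminus v}(u')$, so the symmetric difference $N_G(u)\triangle N_G(u')$ is nonempty (as $N_G(u)\neq N_G(u')$) yet contained in $\{v\}$, and therefore equals $\{v\}$. Consequently each unordered pair is determined by at most one vertex. This is exactly the hypothesis $C\subseteq B\subseteq A$ already noted at the start of the lemma, and it lets me charge determining vertices injectively to the pairs (equivalently, edges) on which they act.

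With this in hand I would argue by contradiction. Suppose every vertex of $G$ determines some pair. Apply Lemma~\ref{lem:determining} with $A=V_G$. By construction every determined pair is an edge of $L$, so the set $B$ of vertices determining some edge of $L$ is all of $V_G$. Choose a spanning tree $S$ of $L$ (a spanning forest, one tree per connected component). The lemma gives $C=B=V_G$, where $C$ is the set of vertices determining an edge of $S$. But each edge of $S$ is determined by at most one vertex, so the map sending an edge to its unique determining vertex surjects onto $C$, whence $|C|\le |E(S)|\le |V_G|-1$. This contradicts $|C|=|V_G|$, and the contradiction yields the desired vertex.

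The crux is the combinatorial content already packaged in Lemma~\ref{lem:determining}, namely the equality $B=C$. A priori a determining vertex might act only on pairs that happen to be non-tree edges of $L$, so that restricting from $L$ to a spanning tree could drop it from the count; the lemma rules this out via its cycle-rotation argument, ensuring every determining vertex already appears on a tree edge. Once $B=C$ is granted the theorem is a clean edge-counting inequality: $|V_G|$ determining vertices cannot all be squeezed onto the at most $|V_G|-1$ edges of a spanning forest. I expect this lemma to be the main obstacle, while the reduction to it and the final count are routine.
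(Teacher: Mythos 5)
Your proposal is correct and follows essentially the same route as the paper: apply Lemma~\ref{lem:determining} with $A=V_G$, use that each pair is determined by at most one vertex so that the at most $|V_G|-1$ edges of the spanning tree cannot account for all $|V_G|$ vertices, and conclude some vertex determines nothing. The only cosmetic differences are that you phrase it as a contradiction rather than directly, and you spell out the uniqueness of the determining vertex, which the paper leaves implicit.
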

\begin{proof}
Denote by $n$ number of vertices of $G$.
 If there is a vertex $v$ of $G$ such that the graph $G\setminus v$ is point-determining, 
it is the desired F-core.  Consider graph $S$ as in Lemma~\ref{lem:determining} where $A$ is the
vertex set of $G$. Because $S$ has at most $n-1$ edges and every edge of $S$ is determined
by at most one vertex, we know that there is vertex $v$ which does not determine any
pair of vertices and thus $G\setminus v$ is point-determining.
\end{proof}
In fact both \cite{Sumner1973,Hell2013} shows that every F-core $G$ with at least 2 vertices
contains vertices $v_1\neq v_2$ such that both $G\setminus v_1$ and $G\setminus v_2$ are F-cores.
This follows by our argument, too but needs bit more detailed analysis.  The main idea of the
following proof of Theorem~\ref{thm:fullgap} can also be adapted to show this.

\begin{proof}(of Theorem~\ref{thm:fullgap})
Assume to the contrary that there are F-cores $G$ and $H$ such that $(G,H)$ is
an F-gap, but $G$ differs from $H$ by more than one vertex.  By induction we construct two infinite sequences of vertices of $H$ denoted by 
$u_0,u_1,\ldots$ and $v_0,v_1,\ldots$ along with two infinite sequences of
induced subgraphs of $H$ denoted by $G_0,G_1,\ldots$ and $G'_0,G'_1,\ldots$ such that for every $i\geq 0$ it holds that:
\begin{enumerate}
\setlength\itemsep{0em}
\item $G_i$ and $G'_i$ are isomorphic to $G$,
\item $G_i$ does not contain $u_i$ and $v_i$,
\item $G'_i$ does not contain $u_i$ and $v_{i+1}$,
\item $u_i$ and $u_{i+1}$ is determined by $v_i$, and,
\item $v_i$ and $v_{i+1}$ is determined by $u_i$.
\end{enumerate}

Put $G_0=G$ and $A=V_H\setminus V_G$.
Consider the spanning tree $S$ given by Lemma~\ref{lem:determining}.
 Because no vertex of $A$ can be removed to obtain an induced
point-determining subgraph, it follows that every vertex must have a
corresponding edge in $S$. Consequently the number of edges of $S$ is at least
$|A|$. Because $G$ itself is point-determining, it follows that every edge of
$S$ must contain at least one vertex of $A$.  These two conditions yields to
the pair of vertices $v_0\in A=V_H\setminus V_G$ and $v_1\in V_G$ connected by an
edge in $S$ and consequently we have a vertex $u_0\in A$ which determines them. 
We have obtained $G_0, u_0, v_0, v_1$ with the desired properties.
This finishes the initial step of the induction.

\medskip

At the induction step assume we have constructed $G_i, u_i, v_i, v_{i+1}$.
We show the construction of $G'_i$ and $u_{i+1}$. We consider two cases.
If $v_{i+1}\notin V_{G_i}$ we put $G'_i=G_i$.  If $v_{i+1}\in V_{G_i}$ we let $G'_i$ to be
the graph induced by $H$ on $(V_{G_i}\setminus\{v_{i+1}\})\cup \{v_i\}$.
Because the neighbourhood of $v_i$ and $v_{i+1}$ differs only by a vertex $u_i\notin G_i$ which
determines them we know that $G'_i$ is isomorphic to $G_i$ (and thus also to  $G$) and moreover that $u_i$ is not a vertex of $G'_i$ (because $u_i\notin V_{G_i}$ can not determine itself and thus $u_i\neq v_i$). If $H$ was point-determining
after removal of $v_{i+1}$ we would obtain a contradiction similarly as before. We can thus assume that $v_{i+1}$
determines at least one pair of vertices.  Because neighbourhood $v_{i+1}$ and $v_i$ differs only by $u_i$
we know that one vertex of this pair is $u_i$. Denote by $u_{i+1}$ the second vertex. 

Given $G'_i, u_i, u_{i+1}, v_{i+1}$ we proceed similarly.
If $u_{i+1}\notin V_{G'_i}$ we put $G_{i+1}=G'_i$.  If $u_{i+1}\in V_{G'_i}$ we let $G_{i+1}$ to be
the graph induced by $H$ on $(V_{G'_i}\setminus\{u_{i+1}\})\cup \{u_i\}$.
Again $G_{i+1}$ is isomorphic to $G$ and does not contain $u_{i+1}$ nor $v_{i+1}$. Denote by $v_{i+2}$ a vertex determined by $u_{i+1}$ from
$v_{i+1}$ (which again must exist by our assumption) and we have obtained $G_{i+1}, u_{i+1}, v_{i+1}, v_{i+2}$ with the desired properties.
This finishes the inductive step of the construction.

\medskip

Because $H$ is finite, we know that both sequences $u_0,u_1,\ldots$  and
$v_0,v_1,\ldots$ contains repeated vertices.
Without loss of generality we can assume that repeated vertex with lowest index $j$ appears in the first sequence. We thus have $u_j=u_i$ for some $i< j$. By minimality of $j$ we can assume
that $v_i,v_{i+1},\ldots v_{j-1}$ are all unique. Assume that $v_i$ is
in the neighbourhood of $u_i$, then $v_i$ is not in the neighbourhood of $u_{i+1}$ (because it determines this pair) and consequently also $u_{i+1},u_{i+2},\ldots,u_{j}$.
A contradiction with $u_j=u_i$. If $v_i$ is not in the neighbourhood of $u_i$ we proceed analogously.
\end{proof}

\section{Generalised dualities always exist}
\label{sec:dualities}
To demonstrate the usefulness of Theorem~\ref{thm:fullgap} and Propositions~\ref{prop:F-core} and~\ref{prop:thincmp}
give a simple proof of the existence of generalised dualities in full homomorphism
order.
For two finite sets of graphs $\mathcal{F}$ and $\mathcal{D}$ we say that $(\mathcal{F},\mathcal{D})$ is a {\em generalised finite $F$-duality pair} (sometimes also {\em $\mathcal D$-obstruction}) if for any graph $G$ there exists $F\in \mathcal{F}$ such that $F\FullHom G$ if and only if $G\FullHom D$ for no $D\in \mathcal{D}$.

Existence of (generalised) dualities have several consequences. To mention one, it implies that the decision problem ``given graph $G$ is there $D\in \mathcal{D}$ and full homomorphism $G\to D$?'' is polynomial time solvable for every fixed finite family $\mathcal D$ of finite graphs.
In the graph homomorphism order the dualities (characterised in~\cite{Nesetril2000}) are rare. In the case of full homomorphisms they are however always guaranteed to exist.
\begin{thm}[\cite{Xie2006,Feder2008,Hell2013,Ball2010}]
For every finite set of graphs $\mathcal{D}$ there is a finite set of graphs
$\mathcal{F}$ such that $(\mathcal{F},\mathcal{D})$ is a generalised finite F-duality pair.
\end{thm}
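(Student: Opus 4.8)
The plan is to realise $\mathcal{F}$ as the set of minimal obstructions of the down-set generated by $\mathcal{D}$ in the full homomorphism order, and to invoke Theorem~\ref{thm:fullgap} to bound the size of these obstructions. Since $G\FullHomeq G_{\mathrm{pd}}$ for every graph $G$, and since the validity of $F\FullHom G$ and of $G\FullHom D$ is unchanged when we pass to F-cores, I would first reduce everything to F-cores: replace each $D\in\mathcal{D}$ by its F-core and look for $\mathcal{F}$ among F-cores. Inside the partial order of F-cores, Proposition~\ref{prop:thincmp} identifies $\FullLeq$ with the induced-subgraph relation, so this order is well-founded (ranked by the number of vertices). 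Let $I$ be the set of F-cores admitting a full homomorphism to some $D\in\mathcal{D}$; by transitivity of $\FullHom$, $I$ is a down-set. I then define $\mathcal{F}$ to be the set of $\FullLeq$-minimal F-cores lying in the complement of $I$, i.e.\ the minimal F-cores mapping to no member of $\mathcal{D}$.

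With this definition the duality is essentially formal. For an arbitrary graph $G$ write $G_{\mathrm{pd}}$ for its F-core. If $G\FullHom D$ for some $D\in\mathcal{D}$ and some $F\in\mathcal{F}$ satisfied $F\FullHom G$, then $F\FullHom D$ by transitivity, so $F\in I$, contradicting $F\notin I$; hence no such $F$ exists. Conversely, if $G$ maps to no $D\in\mathcal{D}$, then $G_{\mathrm{pd}}\notin I$, and by well-foundedness of the F-core order there is a $\FullLeq$-minimal obstruction $F\FullLeq G_{\mathrm{pd}}$; this $F$ lies in $\mathcal{F}$ and satisfies $F\FullHom G_{\mathrm{pd}}\FullHomeq G$. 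Thus $(\mathcal{F},\mathcal{D})$ is a generalised finite F-duality pair, \emph{provided} $\mathcal{F}$ is finite.

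Finiteness is the crux, and this is where Theorem~\ref{thm:fullgap} enters. I would bound the number of vertices of any $F\in\mathcal{F}$. Assuming $F$ has at least two vertices, choose a $\FullLeq$-maximal F-core $F'\in I$ with $F'\FullLeq F$ (one exists, since by Theorem~\ref{thm:sub} some one-vertex-deleted induced F-core subgraph of $F$ lies in $I$ by minimality of $F$). A short check shows that $(F',F)$ is an F-gap: any F-core strictly between them would either lie in $I$, contradicting maximality of $F'$, or lie outside $I$, contradicting minimality of $F$. Theorem~\ref{thm:fullgap} then forces $F$ to arise from $F'$ by adding a single vertex, so $|V_F|=|V_{F'}|+1$. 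On the other hand $F'\in I$ means, by Proposition~\ref{prop:thincmp}, that $F'$ embeds into the F-core of some $D\in\mathcal{D}$, whence $|V_{F'}|\le \max_{D\in\mathcal{D}}|V_D|$. Combining these, every member of $\mathcal{F}$ has at most $1+\max_{D\in\mathcal{D}}|V_D|$ vertices, and as there are only finitely many graphs on a bounded number of vertices, $\mathcal{F}$ is finite. The main obstacle is precisely establishing this covering/gap structure so that the size bound applies; once that is in place, the rest is bookkeeping. (One could also bypass the gap theorem and invoke Theorem~\ref{thm:sub} directly, since it already yields an induced F-core subgraph of $F$ with one fewer vertex that must lie in $I$, giving the same bound.)
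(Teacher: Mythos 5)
Your proof is correct, and it rests on the same ingredients as the paper's (Proposition~\ref{prop:thincmp} to bound the down-set of $\mathcal D$, plus Theorems~\ref{thm:sub} and~\ref{thm:fullgap} to control what sits just above it), but it distributes the work differently. The paper defines $\mathcal F$ as the set of F-cores $H$ outside the down-set $\mathcal X$ that form a gap $(G,H)$ with $G\in\mathcal X$; finiteness is then immediate from Theorem~\ref{thm:fullgap}, and the substantive step is completeness, proved by running a chain of one-vertex extensions from a single vertex up to an arbitrary F-core outside $\mathcal X$ (via Theorem~\ref{thm:sub}) and locating the first step that leaves $\mathcal X$. You instead take $\mathcal F$ to be the antichain of $\FullLeq$-minimal obstructions, which makes completeness essentially formal (well-foundedness of the induced-subgraph order on F-cores) and moves the substance into finiteness, where you show that each minimal obstruction $F$ caps a gap over a maximal element of $I$ below it and then invoke Theorem~\ref{thm:fullgap}; that gap verification is correct as you state it. The two definitions of $\mathcal F$ need not yield the same set --- a gap-top over some $G\in\mathcal X$ need not be a minimal obstruction, since it may contain a smaller F-core outside $\mathcal X$ that does not lie between $G$ and it --- so your $\mathcal F$ is the canonical (smallest) choice, at the price of a slightly longer finiteness argument. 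Your closing parenthetical is also right and worth emphasising: for the minimal-obstruction formulation, Theorem~\ref{thm:sub} alone already gives the bound $|V_F|\le 1+\max_{D\in\mathcal D}|V_D|$, so the gap theorem is not strictly needed --- though the paper's point in this section is precisely to showcase Theorem~\ref{thm:fullgap}, which your main argument does as well.
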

\begin{proof}
Without loss of generality assume that $\mathcal D$ is a non-empty set of F-cores.
Consider set $\mathcal X$ of all F-cores $G$ such that there is $D\in \mathcal D$, $G\to D$.
Because, by Proposition~\ref{prop:thincmp},
 the number of vertices of every such $G$ is bounded from above by the number of vertices of $D$ and because $\mathcal D$
is finite, we know that $\mathcal X$ is finite.

Now denote by $\mathcal{F}$ the set of all F-cores $H$ such that $H\notin \mathcal X$ and there
is $G\in \mathcal X$ such that $(G,H)$ is a gap.  By Theorem~\ref{thm:fullgap} this set is finite.
We show that $(\mathcal{F},\mathcal{D})$ is a duality pair.

Consider an F-core $G$, either $G\in \mathcal X$ and thus there is $D\in \mathcal
D$, $G\to D$ or $G\notin \mathcal X$ and then consider a sequence of $F$-cores
$G_1,G_2,\ldots, G_{|G|}=G$ such that $G_1\in \mathcal X$ consists of single
vertex, $G_{i+1}$ is created from $G_i$ by adding a single vertex for every
$1\leq i<|G|$ (such sequence exists by Theorem~\ref{thm:sub}). Clearly there is $1\leq j<|G|$ such that $G_j\in \mathcal X$
and $G_{j+1}\notin \mathcal X$. Because $(G_j,G_{j+1})$ forms a gap, we know
that $G_{j+1}\in \mathcal F$.
\end{proof}
\begin{remark}
A stronger result is shown by Feder and Hell \cite[Theorem 3.1]{Feder2008} who shows
that if $\mathcal D$ consists of single graph $G$ with $k$ vertices, then $\mathcal F$ can
be chosen in a way so it contains graphs with at most $k+1$ vertices and there are at
most two graphs having precisely $k+1$ vertices.
While, by Theorem~\ref{thm:fullgap}, we can also give the same upper bound on number
of vertices of graphs in $\mathcal F$, it does not really follow that there
are at most two graphs needed. It appears that the full machinery of~\cite{Feder2008} is necessary
to prove this result.

In the opposite direction it does not seem to be possible to derive Theorem~\ref{thm:fullgap}
from this characterisation of dualities, because given pair of non-isomorphic F-cores $G\FullHom H$
and $\mathcal D$ a full homomorphism dual of $\{G\}$ it does not hold that for a graph $F\in \mathcal D$
such that $D\FullHom H$ there is also full homomorphism $G\FullHom H$.
\end{remark}
\section{Full homomorphisms of relational structures}
\label{sec:relational}
To the date, the full homomorphism order has been analysed in the context
of graphs and digraphs only.  Let us introduce generalised setting of relational
structures:

A language $L$ is a set of relational symbols $\rel{}{}\in L$, each associated with natural number $\arity{}$ called \emph{arity}.
A \emph{(relational) $L$-structure} $\str{A}$ is a pair $(A,(\rel{A}{};\rel{}{}\in L))$ where $\rel{A}{}\subseteq A^{\arity{}}$ (i.e. $\rel{A}{}$ is a $\arity{}$-ary relation on $A$). The set $A$ is called the \emph{vertex set} of $\str{A}$ and elements of $A$ are \emph{vertices}. The language is usually fixed and understood from the context.  If the set $A$ is finite we call \emph{$\str A$ finite structure}.  The class of all finite relational $L$-structures will be denoted by $\Rel(L)$.

A \emph{homomorphism} $f:\str{A}\to \str{B}=(B,(\rel{B}{};\rel{}{}\in L))$ is a mapping $f:A\to B$ satisfying for every $\rel{}{}\in L$ the implication $(x_1,x_2,\ldots, x_{\arity{}})\in \rel{A}{}\implies (f(x_1),f(x_2),\ldots,f(x_{\arity{}}))\in \rel{B}{}$.  A homomorphism is \emph{full} if the above implication is equivalence, i.e. if for every $\rel{}{}\in L$ we have $(x_1,x_2,\ldots, x_{\arity{}})\in \rel{A}{}\iff (f(x_1),f(x_2),\ldots,f(x_{\arity{}}))\in \rel{B}{}$. 

Given structure $\str{A}$ its vertex $v$ is contained in a {\em loop} if there exists $(v,v,\ldots,\allowbreak v)\in \rel{A}{}$ for some $\rel{}{}\in L$ of arity at least 2.
Given relation $\rel{A}{}$ we denote by $\overline{R}_\str{A}$ its complement, that is the set of all $\arity{}{}$-tuples $\vec{t}$ of vertices of $A$ that are not in $\rel{A}{}$.

When considering full homomorphism order in this context, the first problem is what should be considered to be the neighbourhood of a vertex. This can be described as follows:
Given $L$-structure $\str{A}$, relation $\rel{}{}\in L$ and vertex $v\in A$ such that $(v,v,\ldots, v)\notin \rel{A}{}$ the {\em $\rel{}{}$-neighbourhood} of $v$ in $\str{A}$, denoted by $N^{\rel{}{}}_\str{A}(v)$ is the set of all tuples $\vec{t}\setminus v$ created from
$\vec{t}\in \rel{A}{}$ containing $v$. Here by $\vec{t}\setminus v$ we denote tuple created from $\vec{t}$ by replacing all occurrences of vertex $v$ by a special symbol $\bullet$ which is not part of any vertex set. If $(v,v\ldots, v)\in \rel{A}{}$ then the $\rel{}{}$-neighbourhood $N^{\rel{}{}}_\str{A}(v)$ is the set of all tuples $\vec{t}\setminus v$ created from $\vec{t}\in \overline{R}_\str{A}\cup \{(v,v,\ldots, v)\}$. The {\em neighbourhood} of $v$ in $\str{A}$ is a function assigning every relational symbol its neighbourhood:
$N_\str{A}(v)(R)=N^{\rel{}{}}_\str{A}(v).$

We say that $L$-structure $\str{A}$ is {\em point-determining} if there are no
two vertices with same neighbourhood.  With these definitions direct analogies
of Proposition~\ref{prop:F-core} and \ref{prop:thincmp} for $\Rel(L)$ follows.

Analogies of Lemma~\ref{lem:determining}, Theorem~\ref{thm:sub} and Theorem~\ref{thm:fullgap} do not
follow for relational structures in general.  Consider, for example, a relational structure with three vertices $\{a,b,c\}$
and a single ternary relation $R$ containing one tuple $(a,b,c)$. Such structure is point-determining, but the only point-determining
substructures consist of single vertex.
There is however deeper problem with carrying Lemma~\ref{lem:determining} to relational structures: if a pair of
vertices $u,u'$ is determined by vertex $v$ their neighbourhood may differ by tuples containing additional vertices. Thus
the basic argument about cycles can not be directly applied here.  We consequently formulate results for relational language
consisting of unary and binary relations only (and, as a special case, to digraphs):
\begin{thm}
\label{thm:fullgap2}
Let $L$ be a language containing relational symbols of arity at most 2.  If $\str{A}$ and $\str{B}$ are (relational) F-cores and $(\str{A},\str{B})$ is an F-gap, then $\str{A}$ can be obtained from $\str{B}$ by removal of one vertex.
\end{thm}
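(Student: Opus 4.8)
The plan is to show that, once the language has arity at most two, every ingredient of the proof of Theorem~\ref{thm:fullgap} survives, with the binary relation ``$x$ is (non)adjacent to $v$'' replaced by a finite invariant recording the full relationship between $x$ and a fixed vertex $v$. The excerpt already grants the analogues of Proposition~\ref{prop:F-core} and Proposition~\ref{prop:thincmp} for all of $\Rel(L)$, so F-cores are exactly the point-determining structures and full homomorphisms between them are induced embeddings. Thus the only places where the graph argument genuinely used the combinatorics of adjacency are Lemma~\ref{lem:determining}, its corollary Theorem~\ref{thm:sub}, and the swap/flip steps in the proof of Theorem~\ref{thm:fullgap}. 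I would therefore isolate the single structural fact on which all of these rest and prove its binary analogue.

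Concretely, I would say that a vertex $v$ \emph{determines} a pair $u,u'$ when $N_{\str{B}\setminus v}(u)=N_{\str{B}\setminus v}(u')$, and for a fixed vertex $v$ define the \emph{link of $x$ to $v$} to be the data recording, for each binary $\rel{}{}\in L$, whether $(x,v)\in\rel{B}{}$ and whether $(v,x)\in\rel{B}{}$. The key lemma to establish is the localisation statement: if $w$ determines $\{a,b\}$, then $a$ and $b$ agree on all unary relations, on all loops, and on all binary relationships with every vertex other than $w$ (and, being distinct in a point-determining structure, they differ in their relationship with $w$). In particular, for any $v\neq w$, $a$ and $b$ have the same link to $v$. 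This is exactly where arity at most two is used: unwinding the neighbourhood definition, $w$ determining $\{a,b\}$ means $a$ and $b$ induce the same relational picture on the structure with $w$ deleted, so they can differ only on tuples that involve $w$; in a binary language a tuple witnessing a relationship between $x$ and $v$ is a single pair $(x,v)$ or $(v,x)$ and contains no third vertex, hence for $x\neq w$ it is untouched by the deletion of $w$ and is preserved. For a ternary relation the offending tuple $(x,v,w)$ shows why the statement fails, as noted before the theorem.

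With the localisation lemma in hand I would re-run Lemma~\ref{lem:determining} essentially word for word. Along the tree path $u=v_1,\dots,v_n=u'$ obtained from an $L$-edge $\{u,u'\}\notin S$ determined by $v$, each edge $\{v_i,v_{i+1}\}$ of $S$ is an $L$-edge, hence determined by a unique vertex $w_i$, and $w_i\neq v$ since $v$ determines no edge of $S$; by localisation $v_i$ and $v_{i+1}$ then have the same link to $v$. The link to $v$ is thus constant along the whole path, contradicting the fact that the endpoints $u,u'$, being determined by $v$, agree on all other relationships yet are distinct and so must differ precisely in their link to $v$. The binary analogue of Theorem~\ref{thm:sub} then follows from the same counting of the edges of the spanning tree $S$, giving in particular that every relational F-core on at least two vertices contains an induced F-core on one fewer vertex.

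Finally I would transcribe the inductive construction proving Theorem~\ref{thm:fullgap}. The only two non-formal moves are (i) replacing a determined vertex $v_{i+1}$ by $v_i$ inside an induced copy $\str{G}'_i$ that avoids $u_i$, which yields an isomorphic copy because the map fixing every shared vertex and sending $v_{i+1}\mapsto v_i$ alters only pairs incident to $v_{i+1}$, and these agree with the corresponding pairs at $v_i$ by localisation (here arity at most two guarantees that no tuple linking $v_{i+1}$ to two other vertices is disturbed); and (ii) the concluding contradiction, read through the link to $v_i$ in place of adjacency: since $v_i$ determines $u_i,u_{i+1}$ these differ in their link to $v_i$, whereas for each $k$ with $i<k<j$ the pair $u_k,u_{k+1}$ is determined by $v_k\neq v_i$ and so, by localisation, shares a common link to $v_i$; the link of $u_{i+1},\dots,u_j$ to $v_i$ is therefore constant and unequal to that of $u_i$, contradicting $u_j=u_i$. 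I expect the localisation lemma to be the real crux: verifying that ``determined'' genuinely localises to the two-element relationship is both the heart of the argument and precisely the point that breaks for higher arity, so the care concentrates on that step, after which the three graph proofs carry over unchanged.
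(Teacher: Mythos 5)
Your proposal is correct and matches the paper's intended argument: the paper states Theorem~\ref{thm:fullgap2} without a written proof, merely observing that for arity at most $2$ the neighbourhood difference of a determined pair involves no third vertex, so Lemma~\ref{lem:determining}, Theorem~\ref{thm:sub} and the proof of Theorem~\ref{thm:fullgap} carry over. Your ``localisation lemma'' is precisely that observation made explicit, and the rest of your transcription follows the paper's graph proofs step for step.
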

The example above shows that the limit on arity of relational symbols is
actually necessary.  This may be seen as a surprise, because the results about
digraph homomorphism orders tend to generalise naturally to relational
structures and we thus close this paper by an open problem of characterising
gaps in full homomorphism order of relational structures in general.

\bibliographystyle{abbrv}
\bibliography{constrainedhomo}
\end{document}